\title[On the structural properties of an efficient feedback law]{On the structural properties \\of an efficient feedback law}
\author{Ambroise Vest}
\address{Institut de Recherche Math\'ematique Avanc\'ee,  Universit\'e de Strasbourg\\7 rue Ren\'e Descartes, 67084 Strasbourg C\'edex, France}
\email{ambroise.vest@math.unistra.fr}
\date{February 25, 2013}
\subjclass[2000]{Primary 93D15; Secondary 93C05, 47D06, 93C20}
\keywords{stabilizability by feedback, linear distributed system, partial differential equation}
\newcommand{\R}{\mathbb{R}}
\newcommand{\Om}{\Omega}
\newcommand{\om}{\omega}
\newcommand{\Lom}{\Lambda_{\omega}}
\newcommand{\ILom}{\Lambda_{\omega}^{-1}}
\newcommand{\ILomT}{\Lambda_{\omega,T}^{-1}}
\newcommand{\LomT}{\Lambda_{\omega,T}}
\newcommand{\LomTO}{\Lambda_{\omega,T_0}}
\newcommand{\RomT}{R_{\omega,T}}
\newcommand{\Tom}{T_{\omega}}
\newcommand{\eom}{e_{\omega}}
\newcommand{\eomT}{e_{\omega,T}}
\newcommand{\la}{\langle}
\newcommand{\ra}{\rangle}
\newcommand{\wJ}{\widetilde{J}}
\newcommand{\wA}{\widetilde{A}}
\newcommand{\wu}{\widetilde{u}}
\newcommand{\Do}{\mathcal{D}}
\newcommand{\Cont}{\mathcal{L}}
\newcommand{\ud}{\,\mathrm{d}}
\newtheorem{thm}{Theorem}[section]
\newtheorem{prop}[thm]{Proposition}
\theoremstyle{definition}
\newtheorem*{deff}{Definition}
\theoremstyle{remark}
\newtheorem*{rk}{Remark}
\theoremstyle{remark}
\newtheorem*{rks}{Remarks}
\theoremstyle{remark}
\theoremstyle{remark}
\theoremstyle{remark}
\newtheorem*{ex}{Example}
\begin{document}
\begin{abstract}
We investigate some structural properties of an efficient feedback law that stabilize linear time-reversible systems with an arbitrarily large decay rate. After giving a short proof of the generation of a group by the closed-loop operator, we focus on the domain of the infinitesimal generator in order to illustrate the difference bewteen a distributed control and a boundary control, the latter being technically more complex. We also give a new proof of the exponential decay of the solutions and we provide an explanation of the higher decay rate observed in some experiments.
\end{abstract}
\maketitle
\section{Introduction}
Given an abstract evolutionary problem
\begin{equation}\label{openloop}
\begin{cases}
x'(t)=Ax(t)+Bu(t),\quad t\ge 0,\\
x(0)=x_0,
\end{cases}
\end{equation}
where $A$ is the generator of a group, $B$ is a control operator and $u(t)$ is a control function, many works were devoted to the obtention of feedback operators
\begin{equation}\label{feedback}
u(t)=Fx(t)
\end{equation}
making the system \eqref{openloop}-\eqref{feedback} exponentially stable.

While the finite-dimensional case and more generally the case of a bounded operator $B$ were conveniently treated by semigroup method (see e.g., \cite{Lukes1968},\cite{Kleinman1970},\cite{Slemrod},\cite{Russell1979}), the case of unbounded control operators presented serious technical obstacles.

However, since unbounded control operators appear in the important problems of boundary or pointwise control, this question was studied extensively. Following several explicit constructions of stabilizing boundary feedbacks (see e.g., \cite{QR1977},\cite{Chen1979},\cite{Lagnese1983},\cite{LT1987}), J.-L. Lions \cite{Lions1988} gave a general approach that relies on the exact controllability of the system \eqref{openloop} and the theory of infinite-dimensional Riccati equations. In these works, the decay rate of the solutions cannot be arbitrarily large.

In \cite{K97}, V. Komornik  simplified the approach of \cite{Lions1988} by constructing feedbacks yielding prescribed decay rates. In a preceding work \cite{AV1}, we gave a rigourous justification of this method.

The purpose of this paper is twofold. In section \ref{WP}, we give a shorter and more direct justification of the well-posedness of the closed-loop system, avoiding the use of variation of constants formulas. Then, we exhibit a crucial difference between the cases of bounded and unbounded control operators. While in the bounded case the infinitesimal generator is just $A+BF$, in the unbounded case, it is an extension of $A+BF$. We show this by an explicit example. This essential difference may also explain why boundary control problems are usually technically more difficult to treat than internal control problems.

Our results also suggest  to have another look at the decay  rate of the solutions. In section \ref{Decay}, after giving a shorter proof  of the exponential decay of the solutions, we provide an explanation of an interesting phenomenon observed in \cite{Briffaut1999} and \cite{BJCR2004}:  in numerical simulations and physical experiments perfomed with the feedback introduced in \cite{K97}, the decay rate is usually twice as big as the one obtained theoretically. 

\begin{rks}${}$
\begin{itemize}
\item
The problems that we have in mind are time-reversible and generally model oscillating systems like waves or plates (see the examples given in \cite{K97}). This feedback has also been used to stabilize Maxwell equations in \cite{K98}, elastodynamic systems in \cite{AK1999} and partially observable systems in \cite{KL05}. 
\item
Another explicit feedback stabilizing a vibrating beam with arbitrarily large decay rates has been given recently in \cite{SGK2009}.
\end{itemize}
\end{rks}
\section{Hypotheses and notations}\label{Hyp}
Given a normed vector space $X$, $\|.\|_X$ denotes its norm, $\la.\,,.\ra_{X',X}$ denotes the duality pairing between $X$ and its dual $X'$. 
\footnote{
Sometimes we will omit the name of the spaces below the brackets in order to enlight the notations. 
}
The quantity $(.\,,.)_X$ represents a scalar product and $\|.\|$ represents the norm of a continuous linear operator beween two normed vector spaces, depending on the context.

The state space $H$ and the control space $U$ are Hilbert spaces. We denote by $H'$ and $U'$ their duals
and by 
\begin{itemize}
\item[]
$J:U'\to U$  the canonical  isomorphism between $U'$ and $U$;
\item[]
$\wJ:H\to H'$  the canonical  isomorphism between $H$ and $H'$.
\end{itemize}
We will always identify a Hilbert space with its bidual. Moreover we assume that the following hypotheses are satisfied:
\begin{itemize}
\item[\bf(H1)]The operator $A : \Do(A)\subset H\to H$ is the infinitesimal generator of a strongly continuous group $e^{tA}$ on $H$. 
\footnote{
Thus its adjoint $A^*: \Do(A^*)\subset H'\to H'$ generates a  group $e^{tA^*}=(e^{tA})^*$ on $H'$.
}
\item[\bf(H2)]
$B \in \Cont(U,\Do(A^*)')$.
\item[\bf(H3)]
Given $T>0$, there exists a positive constant $c_1(T)$ such that
\begin{equation*}
\int_0^T\|B^*e^{-tA^*}x\|^2_{U'}\ud x\le c_1(T)\|x\|^2_{H'}
\end{equation*}
for all $x\in \Do(A^*)$. 
\item[\bf(H4)]
There exists a number $T_0>0$ and a positive constant $c_2(T_0)$ such that
\begin{equation*}
c_2(T_0)\|x\|_{H'}^2\le \int_0^{T_0}\|B^*e^{-tA^*}x\|^2_{U'}\ud t
\end{equation*}
for all $x\in \Do(A^*)$. 
\end{itemize} 
\pagebreak
\begin{rks}${}$
\begin{itemize}
\item
$\Do(A^*)'$ denotes the dual of $\Do(A^*)$, which is a Hilbert space, provided with the  norm 
$\|x\|^2_{\Do(A^*)}:=\|x\|^2_{H'}+\|A^*x\|^2_{H'}$.
Moreover,
\begin{equation*}
\Do(A^*)\subset H' \quad \Longrightarrow \quad H \subset \Do(A^*)'
\end{equation*}
with dense and continuous embeddings.
We denote by $B^*\in \Cont(\Do(A^*),U')$ the adjoint of $B$. This implies the existence of a complex number $\lambda$ (in the resolvent set of $-A$) and a bounded operator $E\in \Cont(U,H)$ such that
\begin{equation*}
B^*=E^*(A+\lambda I)^*.
\end{equation*}
If $B\in \Cont(U,H)$, we say that $B$ is \emph{bounded}. This is the case with a distributed control. Otherwise, we say that $B$ is \emph{unbounded}, which covers the case of a boundary control.
\item
In the examples, the inequality in (H3) represents a trace regularity result (see \cite{LT1983}). It is usually called the \emph{direct inequality}.
Thanks to the assumptions (H1)-(H2), if this inequality satisfied for one $T>0$, then it is satisfied for all $T>0$ (up to a change of the positive constant). Moreover, the estimation remains true  if we integrate on $(-T,T)$. This inequality is extended to all $x\in H'$ by density and the function $t\mapsto B^*e^{-tA^*}x$ can be seen as an element of $L^2_{\text{loc}}(\R;U').$
\item
The inequality of (H4)  is usually called the \emph{inverse} or \emph{observability inequality}. Obviously it remains true if we integrate on $(0,T_1)$ with $T_1>T_0$ but need not be true if $0<T_1<T_0$.
\end{itemize}
\end{rks}

Let us recall how the feedback of \cite{K97} is constructed. We fix a number $\om>0$, set
\begin{equation*}
\Tom:=T_0+\frac{1}{2\om},
\end{equation*}
and we introduce the following weight function (see Figure 1) on the interval $[0,\Tom]$:
\begin{equation*}
e_{\omega}(s):= 
\left\{
\begin{aligned}
&e^{-2\omega s} \quad \text{if} \quad  0\le s \le T_0 \\
&2\omega e^{-2\omega T_0}(T_{\omega}-s) \quad \text{if } \quad  T_0 \le s \le T_{\omega}. 
\end{aligned}
 \right.
\end{equation*}
\begin{figure}  \label{fig} 
\begin{tikzpicture}[scale=3]
\draw[->] (-0.5,0) -- (2.5,0); 
\draw[->] (0,-0.5) -- (0,1.5);
\draw[dashed] (1,-0.5) -- (1,1.5);
\draw[dashed] (-0.5,0.135335283)--(2.5,0.135335283);
\draw [domain=0:1,smooth,thick] plot (\x,{exp(-2*\x)});
\draw[thick] (1,0.135335283)--(1.5,0);
\draw (-0.1,0) node [below] {$0$};
\draw(0.9,0) node [left,below] {$T_0$};
\draw(1.5,0) node [left,below] {$T_{\omega}$};
\draw(-0.165,0.2) node {$e^{-2\omega T_0}$};
\draw(-0.05,1) node {$1$};
\end{tikzpicture}
\caption{Weight function $\eom$}
\end{figure}
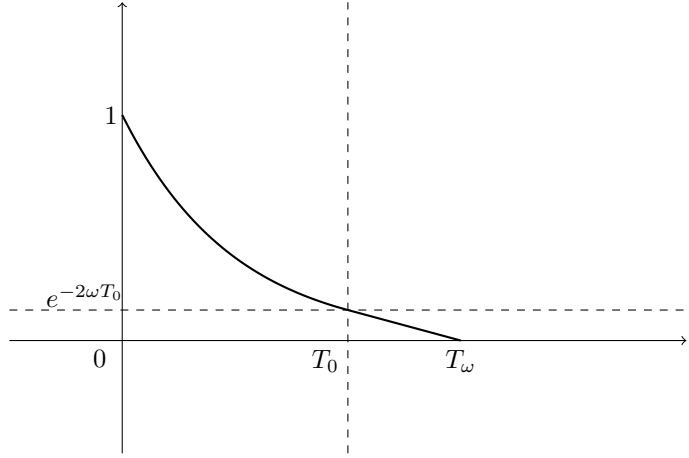
Thanks to (H3) and (H4), the relation
\begin{equation}\label{Lom}
\la\Lom x, y\ra_{H,H'}:=\int_0^{\Tom} \eom(s) \la JB^*e^{-sA^*}x, B^*e^{-sA^*}y \ra_{U,U'} \ud s
\end{equation}
defines a self-adjoint operator $\Lom \in L(H',H)$ which is bounded from below. In particular, $\Lom$ is invertible and we denote by $\ILom$ its inverse. The feedback operator is defined as
\begin{equation}\label{explicit}
F:=-B^*J\ILom.
\end{equation}

Now we recall an equation satisfied by $\Lom$. This relation will be essential in the analysis of the well-posedness of the closed-loop problem and the decay rate of its solutions.
Differentiating under the integral in \eqref{Lom}, we can prove that $\Lom$ is a solution to the following algebraic Riccati equation :
\begin{multline}\label{ARE}
\la\Lom A^*x,y\ra_{H,H'}+\la\Lom x,A^*y\ra_{H,H'}\\
+\la C\Lom x,\widetilde{J}C\Lom y \ra_{H,H'}-\la JB^*x,B^*y\ra_{U,U'}=0
\end{multline}
for all $x,y\in \Do(A^*)$. In the above equation, the operator $C\in \Cont(H)$ is the square root of the positive self-adjoint operator $L\in \Cont(H)$ defined by
\begin{equation*}
(Lx,y)_H:=-\int_0^{\Tom} \eom'(s)\la JB^*e^{-sA^*}\Lom^{-1}x, B^*e^{-sA^*}\Lom^{-1}y \ra_{U,U'} \ud s
\end{equation*}
for all $x$ and $y$ in $H$. 
\begin{rk}
The inequality $-\eom'(s)\ge 2\omega \eom(s)$ yields
\begin{equation}\label{CLom}
C^*\wJ C \ge 2\omega \Lom^{-1}.
\end{equation}
\end{rk}
\section{On the well-posedness of the closed-loop problem}\label{WP}
\subsection{Quick justification of the well-posedness}
In this paragraph, we give a justification of the well-posedness of the closed-loop problem \eqref{openloop}-\eqref{feedback} with the explicit feedback \eqref{explicit}. More precisely, we prove that if we replace $A$ by a suitable extension $\wA$ (that will be precised in the next definition), the closed-loop operator $A+BF$ generates a group.
\begin{deff}
We denote by $\wA \in \Cont(H,\Do(A^*)')$ the adjoint of $A^*$ seen as a bounded operator between $\Do(A^*)$ and $H'$. In particular,
\begin{equation*}
\la\wA x,y\ra_{\Do(A^*)',\Do(A^*)}=\la x,A^*y\ra_{H,H'}
\end{equation*}
for all $x \in H$ and $y\in \Do(A^*)$.
\end{deff}
\begin{rk}
The operator $\wA$ is an extension of $A$ i.e. the two operators coincide on $\Do(A)$. Moreover, such an extension is unique by the density of $\Do(A)$ in $H$.
\end{rk}
\begin{thm}\label{thmWP}
The operator 
\begin{equation*}
A_U:=\wA-BJB^*\ILom,\qquad \Do(A_U)=\Lom \Do(A^*)
\end{equation*}
is the infinitesimal generator of a strongly continuous group $U(t)$ in $H$.
\end{thm}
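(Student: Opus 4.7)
The plan is to exhibit $A_U$ as similar, via the isomorphism $\Lom\colon H'\to H$, to a bounded perturbation of $-A^*$. Concretely, I aim to establish the operator identity
\[
A_U = -\Lom(A^* + \wJ L \Lom)\ILom \quad \text{on} \quad \Lom\Do(A^*),
\]
after which the conclusion is immediate: $-A^*$ generates a strongly continuous group on $H'$ by (H1), and $\wJ L \Lom \in \Cont(H')$ is bounded (a composition of bounded maps), so $-A^*-\wJ L\Lom$ with domain $\Do(A^*)$ generates a group $S(t)$ on $H'$ by the classical bounded perturbation theorem. Then $U(t):=\Lom S(t)\ILom$ is a strongly continuous group on $H$ whose generator is exactly $A_U$.

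The heart of the argument is the computation of $A_U\Lom y$ for $y\in\Do(A^*)$. Testing against an arbitrary $z\in\Do(A^*)$ and using the definition of $\wA$ as the adjoint of $A^*\colon\Do(A^*)\to H'$ together with the definition of $B$ as the adjoint of $B^*$, I would first write
\[
\la A_U\Lom y,z\ra_{\Do(A^*)',\Do(A^*)} = \la\Lom y,A^*z\ra_{H,H'} - \la JB^*y,B^*z\ra_{U,U'}.
\]
Substituting the Riccati identity \eqref{ARE} converts the right-hand side into
\[
-\la\Lom A^*y,z\ra_{H,H'} - \la C\Lom y,\wJ C\Lom z\ra_{H,H'}.
\]
The last term I would rewrite first as $(L\Lom y,\Lom z)_H$, using $L=C^*C$, and then, invoking the self-adjointness of $\Lom\in\Cont(H',H)$, as $\la\Lom\wJ L\Lom y,z\ra_{H,H'}$.

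The main technical obstacle will be the bookkeeping of dualities between $H$ and $H'$ (the paper identifies Hilbert spaces with their biduals but not with their duals), and deducing from the resulting identity — valid a priori only for $z\in\Do(A^*)$ — that $A_U\Lom y$, which lives at the outset only in $\Do(A^*)'$, actually belongs to $H$ and coincides with $-\Lom(A^*+\wJ L\Lom)y$. This last point follows from the density of $\Do(A^*)$ in $H'$ together with the continuity of the functional $z\mapsto \la -\Lom(A^*+\wJ L\Lom)y,z\ra_{H,H'}$ on all of $H'$. Once the operator identity is in hand, the group generation itself is a one-line consequence of bounded perturbation and similarity, as above; the argument also shows that the domain $\Lom\Do(A^*)$ is the correct one, since it is precisely the image under $\Lom$ of the domain of the generator on $H'$.
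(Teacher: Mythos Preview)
Your proposal is correct and is essentially the paper's own proof: the paper defines $A_V:=-A^*-C^*\wJ C\Lom$ on $\Do(A^*)$, invokes bounded perturbation to get a group $V(t)$ on $H'$, sets $U(t):=\Lom V(t)\ILom$, and then uses the Riccati identity exactly as you do to verify $A_U=\Lom A_V\ILom$ on $\Lom\Do(A^*)$. The only cosmetic difference is that you write $\wJ L$ where the paper writes $C^*\wJ C$, and since $C$ is the self-adjoint square root of $L$ in $H$ these coincide.
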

\begin{proof}
The operator 
\begin{equation*}
A_V:=-A^*-C^*\wJ C\Lom,\qquad \Do(A_V)=\Do(A^*)
\end{equation*}generates a strongly continuous group V(t) on $H'$. Indeed, this is a classical perturbation result for the generator of (semi)groups (see \cite[p. 22 and p. 76]{Pazy}) since the perturbation $-C^*\wJ C\Lom$ is a bounded operator in $H'$. By another classical result on semigroups (see \cite[p. 43 and p. 59]{EN} ), the conjugated operator
\begin{equation*}
\Lom A_V \ILom,\qquad \Do(\Lom A_V\ILom)=\Lom \Do(A^*)
\end{equation*}
is the generator of a strongly continuous group on $H$ defined by $\Lom V(t)\ILom$. 

Let $z \in \Lom \Do(A^*)$ (i.e. $\ILom z \in \Do(A^*)$) and $y \in \Do(A^*)$. From the  Riccati equation \eqref{ARE},  
\begin{equation*}
\la z,A^*y \ra_{H,H'} -\la B^*\ILom z, B^* y\ra_{H,H'}
=\la \Lom(-A^*-C^*\wJ C\Lom)\ILom z, y\ra_{H,H'}.
\end{equation*}
The definition of the extension $\wA$ and the hypothesis (H2) on $B$ yield
\begin{equation*}
\la (\wA-BJB^*\ILom)z,y \ra_{\Do(A^*)',\Do(A^*)}\\
=\la \Lom(-A^*-C^*\wJ C\Lom)\ILom z,y \ra_{\Do(A^*)',\Do(A^*)}.
\end{equation*}
Since this identity is true for all $y\in \Do(A^*)$,
\begin{equation*}
A_U z=\Lom A_V \ILom z \in \Do(A^*)' .
\end{equation*}
In fact, this identity is true in $H$ since the right member is an element of $H$.

Hence $A_U$ and $\Lom A_V \ILom$ coincides on $\Lom \Do(A^*)$, i.e. setting
\begin{equation*}
U(t):=\Lom V(t) \ILom,
\end{equation*}
the operator $A_U$ with domain $\Do(A_U)=\Lom \Do(A^*)$ is the generator of $U(t)$.
\end{proof}
\begin{rk}
Using the Riccati equation more deeply, it is possible to justify (see \cite{AV1}) the following variation of constants formula :
\begin{equation*}
U(t)x_0=e^{tA}x_0-(A+\lambda I)\int_0^t e^{(t-r)A}EJB^*\Lom^{-1}U(r)x_0\ud r,
\end{equation*}
where for each fixed $t$, the integrand is well defined as an element of $L^2_{\text{loc}}(\R,H)$. The integral is an element of $\Do(A)$ and the second term of the right member is continuous in $t$ (\cite[pp. 459-460]{BDP}). This formula is true for all $x_0 \in H$ and does not involve the extention $\wA$.
\end{rk}
\subsection{The domain of $A_U$.}
We have seen that the domain of $A_U$ is
\begin{equation*}
\Lom \Do(A^*). 
\end{equation*}
\emph{
Is it possible to link this abstract space to $\Do(A)$ or more generally to $\Do(A^k)$, where $k$ is a positive integer?
}

\bigskip
The answer relies on the nature of the control operator $B$. More precisely, if the control is bounded  then the domain of $A_U$ is exactly $\Do(A)$. This is coherent since the construction of the above paragraph is not ``necessary'' in that case, the perturbation $BF$ being a bounded operator in $H$. The situation is more complicated with an unbounded control operator : we will see through an example that in general $\Lom \Do(A^*)$ is not included in $\Do(A)$.

\bigskip \pagebreak
{\bf Bounded control operators.}
\begin{prop}\label{dombounded}
Assume that $B \in \Cont(U,H)$. Then
\begin{equation*}
\Do (A_U) =\Lom \Do(A^*)= \Do (A).
\end{equation*}
\end{prop}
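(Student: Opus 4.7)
The plan is to display $A_U$ as a bounded perturbation of $A$ acting on $\Do(A)$, and then to recover the equality of domains from the uniqueness of (semi)group generators.

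First, I would observe that when $B\in\Cont(U,H)$, the operator $K:=BJB^*\ILom$ is bounded on $H$: indeed $\Lom\in\Cont(H',H)$ is bounded from below, hence $\ILom\in\Cont(H,H')$; and in the bounded case $B^*\in\Cont(H',U')$, so $K\in\Cont(H)$. Now take $z\in\Lom\Do(A^*)=\Do(A_U)$. Theorem \ref{thmWP} gives $A_U z=\wA z-Kz\in H$, and since $Kz\in H$, this forces $\wA z\in H$. By the very definition of $\wA$, the statement $\wA z\in H$ means that the linear form $y\mapsto\la z,A^*y\ra_{H,H'}$ on $\Do(A^*)$ extends continuously to $H'$. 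Because $A$ generates a group, it is closed and densely defined, so this is precisely the criterion for $z\in\Do(A^{**})=\Do(A)$, with $Az=\wA z$. This already yields $\Lom\Do(A^*)\subset\Do(A)$ and the formula $A_U z=Az-Kz$ on that set.

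Next, I would invoke the classical bounded perturbation theorem (\cite[p.~76]{Pazy}), applied to the group generator $A$ and the bounded operator $-K$: the operator $A':=A-K$ with domain $\Do(A)$ generates a strongly continuous group on $H$. By the previous step, $A_U$ is just a restriction of $A'$. To obtain the reverse inclusion $\Do(A)\subset\Lom\Do(A^*)$, one can then use the standard fact that when one semigroup generator is a restriction of another, the two must coincide. Concretely, pick $\lambda$ in the intersection of the resolvent sets; given $x\in\Do(A)$, there exists $w\in\Do(A_U)$ with $(\lambda-A_U)w=(\lambda-A')x$, and since $A_U\subset A'$ this reads $(\lambda-A')w=(\lambda-A')x$, forcing $w=x$ and hence $x\in\Lom\Do(A^*)$.

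The only delicate point is the equivalence $\wA z\in H\Longleftrightarrow z\in\Do(A)$, which relies on the closedness and density of $\Do(A)$ and on identifying $\wA$ with the second adjoint $A^{**}$ through the duality $H\hookrightarrow\Do(A^*)'$. Everything else is pure perturbation theory together with a one-line resolvent argument, and the proposition follows.
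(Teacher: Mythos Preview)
Your proof is correct and takes a genuinely different route from the paper's. For the inclusion $\Lom\Do(A^*)\subset\Do(A)$, the paper unpacks the Riccati equation \eqref{ARE} by hand to exhibit a bounded operator $P$ with $\la z,A^*y\ra=\la Pz,y\ra$, whereas you simply invoke Theorem~\ref{thmWP} (which already encodes the Riccati identity) to get $A_Uz\in H$, and then read off $\wA z\in H\Leftrightarrow z\in\Do(A^{**})=\Do(A)$. For the reverse inclusion, the paper follows Zwart's method: it introduces an auxiliary operator $A_1$, derives the intertwining relation $A\Lom=-\Lom A_1^*$ from the Riccati equation, and then produces the explicit resolvent identity $\Lom^{-1}=(sI-A_1^*)^{-1}\Lom^{-1}(sI+A)$ on $\Do(A)$. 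You instead observe that $A':=A-K$ with domain $\Do(A)$ is itself a group generator by bounded perturbation, that $A_U\subset A'$, and that a generator admits no proper generator extension---a one-line resolvent argument. Your approach is shorter and more conceptual, relying on Theorem~\ref{thmWP} as a black box together with standard semigroup facts; the paper's approach is more explicit and yields a concrete formula relating $\Lom^{-1}$ to resolvents, which may be of independent interest but is not needed for the bare statement of the proposition.
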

\begin{proof}The inclusion $\Lom \Do(A^*) \subset \Do (A)$ is a consequence of the Riccati equation. Let $z \in \Lom \Do(A^*)$, i.e. $z=\Lom x$ for some $x\in \Do(A^*)$. Then, for all $y \in \Do(A^*)$, equation \eqref{ARE} implies that
\begin{align*}
\la z,A^*y\ra=&-\la A^*\Lom^{-1} z,\Lom y \ra-\la Cz,\wJ C\Lom y\ra+\la JB^*\Lom^{-1}z,B^* y\ra\\
=&-\la(\Lom A^*\Lom^{-1}+\Lom C^*\wJ C-BJB^*\Lom^{-1})z,y \ra.  
\end{align*}
From the definition of the adjoint operator, $z\in \Do(A)$ and 
\begin{equation*} 
Az=-(\Lom A^*\Lom^{-1}+\Lom C^*\wJ C-BJB^*\Lom^{-1})z.
\end{equation*}

\bigskip
For the inclusion $\Do(A)\subset \Lom \Do(A^*)$, we use a method of H. Zwart \cite{Zwart2007}. Set
\begin{equation*}
A_1:=A+\Lom C^*\wJ C- BJB^*\Lom^{-1},\qquad \Do(A_1)=\Do(A).
\end{equation*}
Then,
\begin{equation*}
A_1^*=A^*+C^*\wJ C\Lom-\Lom^{-1}BJB^*,\qquad  \Do(A_1^*)=\Do(A^*)
\end{equation*}
We can rewrite the Riccati equation \eqref{ARE} as
\begin{equation*}
\la A^*x,\Lom y\ra+\la \Lom x,A_1^*y\ra=0,
\end{equation*}
for all $x,y \in \Do(A^*)$.\\
We can find a complex number $s$ such that 
\begin{equation*}
s \in \rho(-A)\cap\rho(A_1^*). 
\end{equation*}
Indeed, the operators $-A$ and $A_1^*$ are the generators of strongly continuous groups. From the theorem of Hille-Yosida, their resolvent sets are  unions of two disjoint half-planes.\\
From the first inclusion, for each $x \in \Do(A^*)$, $\Lom x \in \Do(A)$ and 
\begin{align*}
A\Lom x&= -(\Lom A^*\Lom^{-1}+\Lom C^*\wJ C-BJB^*\Lom^{-1})\Lom x\\
&=-\Lom(A^*+C^*\wJ C\Lom-\Lom^{-1}BJB^*)x\\
&=-\Lom A_1^*x,
\end{align*}
which implies that
\begin{equation*}
(sI+A)\Lom x=\Lom (sI-A_1^*)x.
\end{equation*}
As $s\in \rho(A_1^*)\cap\rho(-A)$, the operators $(sI+A)$ and $(sI-A_1^*)$ are invertible. Multiplying the above relation on the left by $(sI+A)^{-1}$ and on the right by  $(sI-A_1^*)^{-1}$, we get
\begin{equation*}
\Lom (sI-A_1^*)^{-1} =(sI+A)^{-1}\Lom \qquad \text{on }H.
\end{equation*}
On $\Do(A)$, we have
\begin{align*}
\Lom^{-1}&=\Lom^{-1}(sI+A)^{-1}\Lom\Lom^{-1}(sI+A)\\
&=\Lom^{-1}\Lom(sI-A_1^*)^{-1}\Lom^{-1}(sI+A)\\
&=(sI-A_1^*)^{-1}\Lom^{-1}(sI+A).
\end{align*}
Thus, $\ILom\Do(A)\subset \Do(A_1^*)=\Do(A^*)$ i.e. $\Do(A)\subset \Lom \Do(A^*)$.
\end{proof}
{\bf Unbounded control operators.} We return to the more general case of an unbounded control operator i.e. 
\begin{equation*}
B\in \Cont(U,D(A^*)').
\end{equation*}
Set $\xi_0\in \Do(A^*)$. Let us explain how to compute $\Lom \xi_0$. We follow the method described in \cite[p. 1603]{K97} in the case of the wave equation with a Dirichlet boundary control and write it in an abstract framework. We can notice the similarity with the computation of the control in the Hilbert Uniqueness Method \cite{Lions1988}.
\begin{itemize}
\item[\textbullet]
We first solve the homogeneous problem
\begin{equation*}
\begin{cases}
\xi'(t)=-A^*\xi(t),\qquad 0\le t\le \Tom,\\
\xi(0)=\xi_0.
\end{cases}
\end{equation*}
The solution $\xi(t)$ (which is continuously differentiable because $\xi_0 \in \Do(A^*)$) is given by 
\begin{equation*}
\xi(t)=e^{-tA^*}\xi_0.
\end{equation*}
\item[\textbullet]
We consider the control 
\begin{equation*}
u(t):=\eom(t)JB^*\xi(t)=\eom (t)JE^*(A+\lambda I)^*\xi(t) \in \mathcal{C}(\R,U).
\end{equation*}
Remark that  $u(\Tom)=0$ because $\eom(\Tom)=0$.
\item[\textbullet]
Then, we solve the inhomogeneous backward problem
\begin{equation*}
\begin{cases}
y'(t)=Ay(t)+Bu(t), \qquad 0\le t\le \Tom,\\
y(\Tom)=0,
\end{cases}
\end{equation*}
whose mild solution (see \cite[pp. 459-460]{BDP}) is given by 
\begin{equation*}
y(t)=-(A+\lambda I)\int_t^{\Tom} e^{(t-s)A}Eu(s)\ud s, \qquad 0\le t\le \Tom.
\end{equation*}
This function is continuous on $[0,\Tom]$ with values in $H$.
\item[\textbullet]
Finally,
\begin{equation*}
\Lom \xi_0=-y(0).
\end{equation*}
Indeed, for $\phi_0 \in \Do(A^*)$  we have 
\begin{align*}
\la y(0),\phi_0\ra&=-\int_0^{\Tom}\la u(s), B^*e^{-sA^*}\phi_0\ra \ud s\\
&=-\int_0^{\Tom}  \eom(s)\la JB^*e^{-sA^*} \xi_0, B^*e^{-sA^*}\phi_0\ra \\
&=-\la \Lom \xi_0,\phi_0 \ra.
\end{align*}
The conclusion is a consequence of the density of $\Do(A^*)$ in $H'$.
\end{itemize}

Now, let us analyze how the regularity of $\Lom\xi_0=-y(0)$ depends on the regularity of $\xi_0$. 
Assume that $\xi_0 \in D((A^*)^2)$ in order to have $u(t)\in \mathcal{C}^1(\R,U)$. We set 
\begin{align*}
z_{\lambda}(t)&:= e^{\lambda t}\int_t^{\Tom} e^{(t-s)A}Eu(s)\ud s\\
&=\int_t^{\Tom}e^{(t-s)(A+\lambda I)}E\wu(s)\ud s,
\end{align*}
where 
\begin{equation*}
\wu(s):=e^{\lambda s}u(s).
\end{equation*}
We recall that $\lambda$ lies in the resolvent set of $-A$ and remark that $\wu(\Tom)=0$.\\ An integration by parts yields
\begin{align*}
z_{\lambda}(t)&=\int_t^{\Tom}e^{(t-s)(A+\lambda I)}E\wu(s)\ud s\\
&=\int_t^{\Tom}(A+\lambda I)e^{(t-s)(A+\lambda I)}(A+\lambda I)^{-1}E\wu(s)\ud s\\
&=(A+\lambda I)^{-1}\int_t^{\Tom}e^{(t-s)(A+\lambda I)}E\wu'(s)\ud s+ (A+\lambda I)^{-1}E\wu(t).
\end{align*}
Thus, 
\begin{align*}
y(0)&=-(A+\lambda I)z_{\lambda}(0)\\
&=\underbrace{-\int_0^{\Tom}e^{-s(A+\lambda I)}E\wu'(s)\ud s}_{\in \Do(A)} \overbrace{-E\wu(0)}^{\in \Do(A)?}.
\end{align*}
The first term on the right side of the above identity is in  $\Do(A)$ (see e.g., \cite[pp. 459-460]{BDP}). Hence, $y(0)$ belongs to $\Do(A)$ if and only if the second term belongs to $\Do(A)$. Let us take a look at this last term
\begin{equation*}
E\wu(0)=Eu(0)=EJB^*\xi_0=EJE^*(A+\lambda I)^*\xi_0.
\end{equation*}
through an example.
\begin{ex} 
We consider the wave equation on a sufficiently smooth domain $\Omega$ with a Dirichlet boundary control acting on the entire boundary $\Gamma$ :
\begin{equation*}
\begin{cases}
y''-\Delta y=0 \quad \text{in} \quad (0,\infty)\times \Omega,\\
y=u \quad  \quad \text{in} \quad (0,\infty)\times \Gamma,\\
y(0)=y_0, \quad y'(0)=y_1 \quad \text{in} \quad \Omega.
\end{cases}
\end{equation*}
This system can be rewritten in the abstract form \eqref{openloop} as follows.  We set
\begin{align*}
&H=H^{-1}(\Om)\times L^2(\Om);\\
&H'=H^{1}_0(\Om)\times L^2(\Om);\\
&U=U'=L^2(\Gamma).
\end{align*}
Next, $-A^*$ is the wave operator on $H'$ and $A:=(A^*)^*$ with 
\begin{align*}
&\Do(A^*)=(H^2(\Omega)\cap H^1_0(\Omega)) \times H^1_0(\Omega);\\
&\Do(A)=L^2(\Omega)\times H^1_0(\Omega)
\end{align*}
(see e.g., \cite{CH1998} or \cite{K97} for details).
The operator $B^*$ is defined for $(\eta_0,\eta_1)\in \Do(A^*)$  by
\begin{equation*}
B^*(\eta_0,\eta_1):=\partial_{\nu}\eta_0\in L^2(\Gamma).
\end{equation*}
Moreover, for $u \in L^2(\Gamma)$, we set
\begin{equation*}
Eu:=(0,-Du),
\end{equation*}
where $D:L^2(\Gamma) \to H^{1/2}(\Om)$ is the Dirichlet map, defined by
\begin{equation*}
\begin{cases}
-\Delta Du=0 \quad \text{in } \Om,\\
Du=u \quad \text{in } \Gamma.
\end{cases}
\end{equation*}
Assume that $(\xi_0,\xi_1)\in \Do(A^*)$. Then
\begin{equation*}
EJB^*(\xi_0,\xi_1)=(0,-D\partial_{\nu}\xi_0)\in H.
\end{equation*}
We seek a condition on  $\xi_0$ in order to have $(0,D\partial_{\nu}\xi_0) \in \Do(A)$ i.e. 
\begin{equation*}
D\partial_{\nu} \xi_0\in H^1_0(\Omega).
\end{equation*}
If $\xi_0 \in H^2(\Om)\cap H^1_0(\Om)$, then $\partial_{\nu} \xi_0 \in H^{1/2}(\Gamma)$ and $D\partial_{\nu} \xi_0 \in H^1(\Omega)$. By definition of $D$, the trace of $D\partial_{\nu} \xi_0$ on the boundary is 
\begin{equation*}
D\partial_{\nu} \xi_0|_{\Gamma}=\partial_{\nu} \xi_0.
\end{equation*}
That is why, in order to have $D\partial_{\nu} \xi_0\in H^1_0(\Omega)$ it is necessary and sufficient that $\partial_{\nu}\xi_0=0$.
But (see e.g., \cite[p. 217]{Brezis})
\begin{equation*}
D((A^*)^2)=\Big\{(\xi,\psi)\in H^3(\Om)\times H^2(\Om) : \xi=\Delta \xi=\psi=0 \quad \text{on }\Gamma \Big\}.
\end{equation*}
Hence, if $(\xi_0,\xi_1)\in \Do((A^*)^2)$, the normal derivative of $\xi_0$ on $\Gamma$ does not necessarily vanish.
Finally, given $(\xi_0,\xi_1)\in \Do((A^*)^2)\subset \Do(A^*)$,
\begin{equation*}
\Lom(\xi_0,\xi_1)\in \Do(A) \iff \partial_{\nu}\xi_0=0.
\end{equation*}
In this example of boundary control,
\begin{equation*}
\Lom\Do(A^*)  \not \subset \Do(A).
\end{equation*}
\end{ex}
\begin{rk}
In the case of a vibrating string, if $\Om=(0,\pi)$,  the eigenfunctions of the Laplacian with Dirichlet boundary conditions are the functions $\sin(nx)$, where $n=1,2,\ldots$ They belong to the class $\mathcal{C}^{\infty}$. Even if $\xi_0$ and $\xi_1$ are linear combinations of these functions, $\Lom(\xi_0,\xi_1)$ does not necessarily belong to  $\Do(A)$ because the normal derivative of $\xi_0$ does not necessarily vanish on $\Gamma$.
\end{rk}
\section{On the decay rate of the solutions}\label{Decay}
Let us give a proof of the exponential decay of the solutions of \eqref{openloop}-\eqref{feedback} with the explicit feedback \eqref{explicit}. The proof is different from the one given in \cite[pp. 1598-1599]{K97}: we do not use an integral representation formula fo $\ILom$. It is closer to the finite dimensional case (see \cite[p. 1597]{K97}).
\begin{prop}\label{rateomega}
There exists a positive constant $c$ such that for each initial data $x_0$,
\begin{equation*}
\|U(t)x_0\|_H\le ce^{-\omega t}\|x_0\|_H,
\end{equation*}
for all $t\ge 0$.
\end{prop}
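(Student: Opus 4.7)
The plan is to transfer the problem to the group $V(t)$ on $H'$ via the conjugation $U(t) = \Lom V(t)\ILom$ established in Theorem \ref{thmWP}, and to prove exponential decay of $V(t)$ by constructing a suitable Lyapunov functional whose decay follows from the Riccati equation \eqref{ARE} and the coercivity estimate \eqref{CLom}.

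First I would fix $y_0 \in \Do(A^*) = \Do(A_V)$ and consider $y(t) := V(t)y_0$, which is a classical solution of $y'(t) = A_V y(t) = -A^*y(t) - C^*\wJ C \Lom y(t)$. I would then introduce the Lyapunov functional
\begin{equation*}
\Phi(t) := \la \Lom y(t), y(t)\ra_{H,H'},
\end{equation*}
which, thanks to the self-adjointness and lower boundedness of $\Lom$, is comparable to $\|y(t)\|_{H'}^2$. Differentiating in $t$ and using symmetry of $\Lom$, one gets
\begin{equation*}
\Phi'(t) = 2\la \Lom y(t), A_V y(t)\ra_{H,H'} = -2\la \Lom y, A^*y\ra - 2\la \Lom y, C^*\wJ C \Lom y\ra.
\end{equation*}

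Next I would apply the Riccati equation \eqref{ARE} with $x = y = y(t) \in \Do(A^*)$ to rewrite $-2\la \Lom A^*y, y\ra = \la C\Lom y, \wJ C \Lom y\ra - \la JB^*y, B^*y\ra$. Substituting this and observing that $\la \Lom y, C^*\wJ C \Lom y\ra = \la C\Lom y, \wJ C\Lom y\ra$ by definition of the adjoint, the two "$C\Lom y$" terms combine to leave
\begin{equation*}
\Phi'(t) = -\la JB^*y, B^*y\ra_{U,U'} - \la C\Lom y, \wJ C \Lom y\ra \le -\la \Lom y, C^*\wJ C \Lom y\ra.
\end{equation*}
The crucial step is then to invoke \eqref{CLom}, which gives $\la \Lom y, C^*\wJ C \Lom y\ra \ge 2\omega\la \Lom y, \ILom \Lom y\ra = 2\omega\Phi(t)$, whence the differential inequality $\Phi'(t) \le -2\omega \Phi(t)$. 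Gronwall then yields $\Phi(t) \le e^{-2\omega t}\Phi(0)$, and the lower/upper bounds on $\Lom$ translate this into $\|V(t)y_0\|_{H'} \le c' e^{-\omega t}\|y_0\|_{H'}$ for all $y_0 \in \Do(A^*)$, and by density for all $y_0 \in H'$.

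Finally, since $U(t)x_0 = \Lom V(t)\ILom x_0$ and both $\Lom$ and $\ILom$ are bounded (between $H'$ and $H$), the estimate on $V(t)$ immediately yields
\begin{equation*}
\|U(t)x_0\|_H \le \|\Lom\|\cdot\|V(t)\ILom x_0\|_{H'} \le \|\Lom\|\,c'\,e^{-\omega t}\|\ILom x_0\|_{H'} \le c\,e^{-\omega t}\|x_0\|_H,
\end{equation*}
which is the claim. The main subtlety is not the algebra but the bookkeeping of the dualities: checking that each term in $\Phi'(t)$ lives in the right space and that the Riccati identity \eqref{ARE}, stated for $x,y \in \Do(A^*)$, is legitimately applied to the trajectory $y(t)$. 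Choosing $y_0 \in \Do(A^*)$ guarantees both $y(t) \in \Do(A^*)$ and the $\mathcal{C}^1$ regularity in $t$ needed to differentiate $\Phi$, and the final density argument extends the estimate to arbitrary initial data.
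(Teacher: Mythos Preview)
Your proof is correct and is essentially the same Lyapunov argument as the paper's: both differentiate the quadratic form associated with $\Lom$ along the flow, use the Riccati identity together with \eqref{CLom} to obtain the differential inequality $\Phi'\le -2\omega\Phi$, and conclude by Gronwall and norm equivalence. The only cosmetic difference is that the paper computes directly with $x(t)=U(t)x_0$ on $H$ and the functional $\la \ILom x(t),x(t)\ra$ (which requires tracking the extension $\wA$ and the duality with $\Do(A^*)'$), whereas you pass to the conjugated group $V(t)$ on $H'$ and the functional $\la \Lom y(t),y(t)\ra$; under $x=\Lom y$ these two functionals coincide, so the computations are in one-to-one correspondence.
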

\begin{proof}
At first, let $x_0\in \Do(A_U)=\Lom  \Do(A^*)$ and set $x(t):=U(t)x_0$. We have already seen that $A_U$ coincides with $\wA-BJB^*\ILom$ on $ \Do(A_U)$.
With this regularity for the initial data, $x(t)$ is differentiable on $\R$ and
\begin{align*}
\frac{1}{2}\frac{\ud}{\ud t}\la \ILom x(t),x(t) \ra_{H',H}&=\la \ILom x(t),x'(t)\ra_{H',H}\\
&=\la \ILom x(t),A_Ux(t)\ra_{H',H}\\
&=\la\underbrace{\ILom x(t)}_{\in  \Do(A^*)},\underbrace{(\wA-BJB^*\ILom)x(t)}_{\in H}\ra_{H',H}\\
&=\la\ILom x(t),\wA x(t)\ra_{ \Do(A^*), \Do(A^*)'}\\
&\qquad \qquad -\la\ILom x(t),BJB^*\ILom x(t)\ra_{ \Do(A^*), \Do(A^*)'}\\
&=\la A^*\ILom x(t),x(t)\ra_{H',H}-\la JB^*\ILom x(t),B^*\ILom x(t)\ra_{U,U'}.
\end{align*}

For all $x,y\in D(A_U)$, we have
\begin{equation*}
\la A^*\Lom^{-1}x,y\ra_{H',H}+\la x,A^*\Lom^{-1}y\ra_{H,H'}+\la \wJ Cx,Cy\ra_{H',H}-\la JB^*\Lom^{-1}x,B^*\Lom^{-1}y\ra_{U,U'}=0.
\end{equation*}
Thus
\begin{align*}
\frac{\ud}{\ud t}\la \Lom^{-1}x(t),x(t) \ra_{H',H}&=-\la \wJ Cx(t),Cx(t)\ra_{H',H}-\la JB^*\Lom^{-1}x(t),B^*\Lom^{-1}x(t)\ra_{U,U'}\\
&\le -\la \wJ Cx(t),Cx(t)\ra_{H',H}\\
&\le -2\omega \la \Lom^{-1}x(t),x(t) \ra_{H',H}
\end{align*}
where the last inequality is a consequence of \eqref{CLom}.

Finally the above estimations yield 
\begin{equation*}
\la\Lom^{-1}x(t),x(t)\ra_{H',H}\le e^{-2\omega t}\la\Lom^{-1}x_0,x_0\ra_{H',H}, \qquad t\ge 0. 
\end{equation*}
This estimation remains true for $x_0\in H$ by density of $D(A_U)$ in $H$.
We conclude by noticing that, thanks to (H3) and (H4), the quantity $(\la \ILom x,x\ra_{H',H})^{1/2}$ defines a norm on $H$ which is equivalent to $\|.\|_H$.
\end{proof}

Let us explain on a finite-dimensional example why we can expect  a better decay rate than $\omega$ in some cases.
\begin{ex}
We consider the the system governed by
\begin{equation*}
\begin{cases}
y''(t)+y(t)=u(t),\qquad t\ge 0\\
y(0)=y_0,\quad y'(0)=y_1.
\end{cases}
\end{equation*}
Setting $x(t)=\begin{pmatrix}y(t)\\y'(t)\end{pmatrix}$,
$x_0=\begin{pmatrix}y_0\\y_1\end{pmatrix}$,
$A=\begin{pmatrix}0&1\\-1&0\end{pmatrix}$ and
$B=\begin{pmatrix}0\\1\end{pmatrix}$,
this system is equivalent to \eqref{openloop}. Hypotheses (H1) to (H3) are obviously satisfied while hypothesis (H4) follows from the observability of the pair $(-A^*,B^*)$ that one can check via the rank condition.

Now we compute the feedback operator \eqref{explicit} replacing $\Lom$ by the slightly different operator (in order to make the computations easier) 
\begin{align*}
\widetilde{\Lom}:=&\int_0^{T_0}e^{-2\om t}e^{-tA^*}BB^*e^{-tA}\ud t\\
=&
\begin{pmatrix}
\int_0^{T_0} e^{-2\om t}\sin^2t\ud t&-\int_0^{T_0} e^{-2\om t}\sin t\cos t\ud t\\
-\int_0^{T_0} e^{-2\om t}\sin t\cos t\ud t&\int_0^{T_0} e^{-2\om t}\cos^2t\ud t
\end{pmatrix}.
\end{align*}
For some particular values of $T_0$, the coefficients of the above matrix are particularly simple and so are the coefficients of the closed loop operator. Chossing $T_0=k\pi$, where $k$ is a positive integer, we obtain
\begin{equation*}
A_U=A-BB^*\widetilde{\Lom}^{-1}=
\begin{pmatrix}
0&1\\
-1-\frac{4\om^2}{1-e^{-2\om k\pi}} & \frac{-4\om}{1-e^{-2\om k\pi}}
\end{pmatrix}.
\end{equation*}

In finite dimension, 
\begin{equation*}
\text{growth bound of $A_U$}\, = \, \max\{\text{Re}(\lambda),\,\lambda \in \text{Sp}(A_U)\}.
\end{equation*}
The eigenvalues of $A_U$ are complex and conjugated since this matrix is real and the discriminant of the characteristic polynomial is negative :
\begin{equation*}
(\text{tr}A_U)^2-4\text{det}A_U=\frac{8e^{-2\om k\pi}}{(1-e^{-2\om k\pi})^2}\Big((1+\frac{(2\om)^2}{2}-\text{ch}(2\om k\pi)\Big)<0
\end{equation*}
Indeed, $\text{ch}(k\pi x)>\text{ch}(x)>1+\frac{x^2}{2}$ for all $x>0$. Hence, denoting by $\lambda$ and $\bar{\lambda}$ the eigenvalues, we obtain
\begin{equation*}
\text{Re}\lambda=\text{Re}\bar{\lambda}=\frac{1}{2}\text{tr}A_U=\frac{-2\om}{1-e^{-2\om k \pi}}<-2\om<-\om.
\end{equation*}
Therefore, some choices of $T_0$ yield a decay rate that is at least twice better than the one obtain in Theorem  \eqref{rateomega}.
\end{ex}
We return to the general case. The aim of the following paragraph is to show that by replacing $T_0$ by $T\ge T_0$ in the definition of $\Lom$ \eqref{Lom}, it is possible  to have a larger decay rate of the solutions. Moreover, for dissipative systems, this decay rate approaches ``quickly'' the value $-2\omega$ as $T$ increases. This may explain the larger decay rate observed in some numerical and physical experiments. 

Let $c\ge 1$ and $\gamma\in \R$ be two constants such that 
\begin{equation*}
\forall t\ge 0, \qquad \|e^{-tA^*}\|\le ce^{\gamma t}.
\end{equation*}
In the sequel, the value $\omega>0$ is fixed and we denote by $\LomT$ the operator obtained in \eqref{Lom} by replacing $T_0$ by $T\ge T_0$ (we will also write $\eomT$ for  the corresponding weight function). Thanks to hypotheses (H3) and (H4), this operator has the same properties as $\Lom$. In particular it is invertible. Note that we also have to replace $C$ by an operator $C_T$ (see the definition of $C$ in section \ref{Hyp}). We can repeat the method of section \ref{WP} to prove the well-posedness of the closed-loop problem with the feedback $F=-JB^*\ILomT$ in the framework of semigroups.

\begin{thm}\label{betterrate}
For a fixed  $T\ge T_0$, the semigroup $U_T(t)$ generated by $\wA-BJB^*\ILomT$ satisfies 
\begin{equation*}
\|U_T(t)\|\le c' \exp\big((-2\omega+\gamma+\alpha \varphi(T))t\big), \qquad t\ge 0,
\end{equation*}
where 
\begin{equation*}
\varphi(T):=\exp\big(\gamma T-2\omega(T-T_0)\big), \qquad 
c':=c\|\LomT\|\|\ILomT\|
\end{equation*}
and $\alpha$ is a positive constant that depends only on $T_0$ and $\omega$.
\end{thm}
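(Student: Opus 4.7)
The plan is to refine the Lyapunov argument of Proposition~\ref{rateomega}, keeping the quadratic functional $V_T(x):=\la\ILomT x,x\ra_{H',H}$ but now exploiting rather than discarding the nonnegative term $\la JB^{*}\ILomT x,B^{*}\ILomT x\ra_{U,U'}$ that appears in its derivative along trajectories. Reproducing verbatim the computation of Proposition~\ref{rateomega} with $\LomT$, $C_T$, $\eomT$ in place of $\Lom$, $C$, $\eom$, one obtains for $x_0\in\LomT\Do(A^{*})$ and $x(t)=U_T(t)x_0$
\[
\frac{d}{dt}V_T(x(t))=-(L_T x(t),x(t))_H-\la JB^{*}\ILomT x(t),B^{*}\ILomT x(t)\ra_{U,U'}.
\]
Setting $z:=\ILomT x$ and $\phi(s):=\la JB^{*}e^{-sA^{*}}z,B^{*}e^{-sA^{*}}z\ra_{U,U'}$, the two terms on the right read $\int_0^{T_{\omega,T}}(-\eomT'(s))\phi(s)\,ds$ and $\phi(0)$, while $V_T(x)=\int_0^{T_{\omega,T}}\eomT(s)\phi(s)\,ds$.

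The crux of the proof is the refined coercivity inequality
\[
(L_T x,x)_H+\la JB^{*}\ILomT x,B^{*}\ILomT x\ra_{U,U'}\ge 2\bigl(2\omega-\gamma-\alpha\varphi(T)\bigr)\,V_T(x),\qquad x\in H.
\]
The pointwise estimate $-\eomT'\ge 2\omega\eomT$ used in Proposition~\ref{rateomega} saturates on $[0,T]$ and yields only the factor $2\omega$ (hence the rate $\omega$). The doubling of the prefactor must be earned by combining (i) the nonnegative excess $(L_T x,x)-2\omega V_T(x)$, which is concentrated on $[T,T_{\omega,T}]$ and where the ratio $(-\eomT'(s))/\eomT(s)$ averages to $4\omega$ instead of $2\omega$, with (ii) the endpoint term $\phi(0)$, which an integration by parts in $s$ on $[0,T_0]$ links to the weighted integral $\int_0^{T_0}e^{-2\omega s}\phi(s)\,ds$ after invoking the observability inequality~(H4) to tie $\phi$ back to $\|z\|_{H'}^{2}$. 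The tail $\int_{T_0}^{T_{\omega,T}}\eomT(s)\phi(s)\,ds$, which would otherwise spoil the doubling, is controlled through the group decomposition $e^{-sA^{*}}=e^{-(s-T_0)A^{*}}e^{-T_0 A^{*}}$, the direct inequality (H3) on the fixed-length subinterval $[0,T_{\omega,T}-T_0]$, and the growth bound $\|e^{-tA^{*}}\|\le ce^{\gamma t}$. The constant $\varphi(T)=e^{\gamma T-2\omega(T-T_0)}$ appears precisely as the product of the growth of $\|e^{-tA^{*}}\|$ up to time $T$ and the exponential decay of the weight $\eomT$ on the tail; a large enough $\alpha=\alpha(\omega,T_0)$ absorbs the numerical constants produced by (H3), (H4), and the iteration.

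Once the coercivity is in hand, Gronwall's lemma gives $V_T(x(t))\le e^{-2(2\omega-\gamma-\alpha\varphi(T))t}V_T(x_0)$ on the dense subspace $\LomT\Do(A^{*})$, and the estimate extends to arbitrary $x_0\in H$. The two-sided equivalence $V_T(x)\sim\|x\|_H^{2}$, with ratio controlled by $\|\LomT\|$, $\|\ILomT\|$ and the constant $c$ from $\|e^{-tA^{*}}\|\le ce^{\gamma t}$, converts the Lyapunov decay into the announced norm estimate with $c'=c\|\LomT\|\|\ILomT\|$. The main obstacle is unquestionably the coercivity step: since $-\eomT'\ge 2\omega\eomT$ is saturated on the main interval $[0,T]$, the entire factor-$2$ improvement over Proposition~\ref{rateomega} must be recovered through the delicate interplay between the endpoint term $\phi(0)$, the nonuniform excess of $-\eomT'$ over $2\omega\eomT$ on $[T,T_{\omega,T}]$, and the control of the tail of $V_T$; the growth constant $\gamma$ and the penalty $\varphi(T)$ in the final rate are exactly the quantitative price paid for estimating this tail.
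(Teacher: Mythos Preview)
Your Lyapunov approach cannot deliver the rate $2\omega-\gamma$: the ``refined coercivity inequality''
\[
(L_T x,x)_H+\phi(0)\;\ge\;2\bigl(2\omega-\gamma-\alpha\varphi(T)\bigr)\,\la\ILomT x,x\ra
\]
is false in general. Since $-\eomT'(s)=2\omega\,\eomT(s)$ exactly on $[0,T]$, the identity $(L_T x,x)=2\omega\la\ILomT x,x\ra+\la\RomT z,z\ra$ (with $z=\ILomT x$ and $\|\RomT\|=O(\varphi(T))$) shows that your inequality is equivalent to
\[
\phi(0)+\la\RomT z,z\ra\;\ge\;\bigl(2\omega-2\gamma-2\alpha\varphi(T)\bigr)\,\la\LomT z,z\ra.
\]
Now pick any $z\in\Do(A^{*})$ with $B^{*}z=0$ (e.g., in the wave example, $z=(0,\xi_1)$ with $\xi_1\neq 0$). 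Then $\phi(0)=0$, the left side is $O(\varphi(T))\|z\|^{2}$, while the right side is bounded below by $(2\omega-2\gamma-2\alpha\varphi(T))c_{2}(T_0)e^{-2\omega T_0}\|z\|^{2}$. For conservative systems ($\gamma=0$) and $T$ large this is a contradiction. The single endpoint value $\phi(0)=\|B^{*}z\|^{2}$ simply cannot dominate $2\omega\la\LomT z,z\ra$; no combination of (H3), (H4) or integration by parts can repair this, because those inequalities do not relate $\phi(0)$ to $\int\eomT\phi$ from below. Note also that in your scheme $\gamma$ has no natural entry point as a leading term in the rate, and the norm equivalence of $V_T$ with $\|\cdot\|_H^{2}$ produces the constant $(\|\LomT\|\|\ILomT\|)^{1/2}$, not $c\|\LomT\|\|\ILomT\|$.

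The paper proceeds quite differently and the mechanism is semigroup-theoretic, not Lyapunov-theoretic. One passes to the conjugate semigroup $V_T(t)=\ILomT U_T(t)\LomT$ on $H'$, whose generator is $A_{V_T}=-A^{*}-C_T^{*}\wJ C_T\LomT$. The key algebraic step is the decomposition $\LomT'=2\omega\LomT+\RomT$ (exactly the identity above), which yields
\[
A_{V_T}=-A^{*}-2\omega I-\ILomT\RomT.
\]
Now $-A^{*}-2\omega I$ generates a group with $\|e^{t(-A^{*}-2\omega I)}\|\le c\,e^{(\gamma-2\omega)t}$; this is where $\gamma$ and the constant $c$ enter. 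The bounded-perturbation theorem then gives the growth bound $\gamma-2\omega+c\|\ILomT\RomT\|$, and estimating $\|\ILomT\|$ via (H4) and $\|\RomT\|$ via (H3) on the short interval $[T,T+1/2\omega]$ produces exactly $\alpha\varphi(T)$. Finally $\|U_T(t)\|\le\|\LomT\|\|\ILomT\|\|V_T(t)\|$ yields $c'=c\|\LomT\|\|\ILomT\|$. The doubling of the rate comes from the operator identity $C_T^{*}\wJ C_T\LomT=2\omega I+\ILomT\RomT$, not from squeezing more out of the Lyapunov derivative.
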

\begin{rk}
This estimation of the decay rate of the solutions of the closed-loop problem may be worst than the one given by Proposition \ref{rateomega} (at least $\omega$). But in the case of conservative or dissipative systems, we have $\gamma=0$. Consequently, if $T-T_0$ is sufficiently large, 
\begin{equation*}
\gamma -2\omega +\alpha\varphi(T) \approx -2\omega,
\end{equation*}
i.e. the decay rate of the solutions is approximately $-2\omega$.
\end{rk}
\begin{proof}
Instead of working with the semigroup $U_T(t)$, we work with one of its conjugates (whose generator is easier to manipulate, see the proof of Theorem \ref{thmWP})
\begin{equation*}
V_T(t):=\ILomT U_T(t) \LomT.
\end{equation*}
Its generator is
\begin{equation*}
A_{V_T}=-A^*-C^*_T\wJ C_T\LomT, \qquad \Do(A_{V_T})=\Do(A^*).
\end{equation*}
We recall from the definition of the operator $C_T$ that 
\begin{equation*}
C^*_T\wJ C_T\LomT=\ILomT \LomT' \in \Cont(H'),
\end{equation*}
where $\LomT'\in \Cont(H',H)$ is the self-adjoint, positive definite operator defined by
\begin{align*}
\la \LomT' x,y\ra:=&-\int_0^{T+1/{2\omega}} \eomT'(s)\la JB^*e^{-sA^*}x,B^*e^{-sA^*}y\ra \ud s\\
=&2\om\int_0^{T}e^{-2\om s}\la JB^*e^{-sA^*}x,B^*e^{-sA^*}y\ra \ud s\\
&\quad  +2\om\int_T^{T+1/{2\om}}\eomT(s) \la JB^*e^{-sA^*}x,B^*e^{-sA^*}y\ra \ud s\\
&\quad -2\om\int_T^{T+1/{2\om}}\eomT(s) \la JB^*e^{-sA^*}x,B^*e^{-sA^*}y\ra \ud s\\
&\quad +2\om e^{-2\om T}\int_T^{T+1/{2\om}}\la JB^*e^{-sA^*}x,B^*e^{-sA^*}y\ra \ud s\\
=:&\,2\om \la \LomT x,y\ra+\la\RomT x,y \ra,
\end{align*}
with $\RomT \in \Cont(H',H)$ self-adjoint and positive. 
Hence
\begin{equation*}
A_{V_T}=-A^*-2\om I-\ILomT\RomT.
\end{equation*}

The operator $-A^*-2\om I$ with domain $\Do(A^*)$ is the generator of a semigroup and we have the following estimation : 
\begin{equation*}
\|e^{t(-A^*-2\om I)}\|=e^{-2\om t}\|e^{-tA^*}\|\le ce^{(\gamma-2\om)t}, \qquad t\ge 0.
\end{equation*}
In order to have an estimation for the semigroup $V_T(t)$, we are going to apply a classical (bounded) perturbation result
\footnote{
If $P$ is a bounded operator in $H'$, the operator $-A^*+P$ with domain $\Do(A^*)$ is the generator of a semigroup in $H'$ and  $\|e^{(-A^*+P)t}\|\le ce^{(\gamma +c\|P\|)t},\, t\ge 0$ (see \cite[p. 76]{Pazy}).
}.
The idea is that the growth of the semigroup generated by a perturbated operator can be expressed in term of the norm of the perturbation. Let us estimate the norm of the bounded perturbation $\ILomT\RomT$.

For all $x\in H'$,
\begin{equation*}
c_2(T_0)e^{-2\om T_0}\|x\|^2_{H'}\le \la \LomTO x,x\ra_{H,H'} \le \la \LomT x,x\ra_{H,H'},
\end{equation*}
where $c_2(T_0)$ is the positive number given by (H4).
Hence, 
\begin{equation*}
\|\ILomT\|\le \frac{e^{2\om T_0}}{c_2(T_0)}.
\end{equation*}

We remark that the weight function in the operator $\RomT$ satisfies 
\begin{equation*}
0\le 2\om(e^{-2\om T}-\eomT(s))=2\om e^{-2\om T}(1-2\om (T-s))\le 2\om e^{-2\om T}
\end{equation*}
for $T\le s \le T+1/{2\om}.$ Thus, 
\begin{equation*}
\|\RomT\|\le  c_1(1/{2\om})ce^{\gamma T}2\om e^{-2\om T},
\end{equation*}
where $c_1(1/{2\om})$ is a positive constant that depends only on $\om$ (see (H3) and the second remark below the hypotheses)
\footnote{
The term``$ce^{\gamma T}$'' appears when we transpose the direct inequality from the interval $(0,1/{2\omega})$ to $(T,T+1/{2\omega})$.
}.

Applying the perturbation result, we obtain
\begin{equation*}
\|V_T(t)\|\le c\exp\Big(\big(-2\om + \gamma +2\om c^2c_1(1/{2\om}) c_2(T_0)^{-1}e^{\gamma T+ 2\om(T_0-T)}\big)t \Big), \quad t\ge 0.
\end{equation*}
The estimation on $U_T(t)$ is a direct consequence of its relationship with $V_T(t)$.
\end{proof}
\begin{rks}${}$
\begin{itemize}
\item
The relationship of the norm of the solution of a Riccati equation and the decay rate of the solutions has  been studied in the framework of optimal control theory in \cite{BL1996}. 
\item
Integrating from $0$ to $\infty$ in the definition of $\Lom$, it is possible to  construct a feedback that leads to a decay rate of  $-2\omega$ in the dissipative case (see \cite{Urquiza2005} and also \cite{Russell1979} for an exposition of this method in the finite-dimensional case).
\end{itemize}
\end{rks}

\end{document}